\begin{document}

\newtheorem{theorem}{Theorem}
\newtheorem{lemma}[theorem]{Lemma}
\newtheorem{claim}[theorem]{Claim}
\newtheorem{cor}[theorem]{Corollary}
\newtheorem{prop}[theorem]{Proposition}
\newtheorem{definition}{Definition}
\newtheorem{question}[theorem]{Open Question}
\def\bE{{\mathbf E}}
\def\mand{\qquad\mbox{and}\qquad}
\def\scr{\scriptstyle}
\def\\{\cr}
\def\({\left(}
\def\){\right)}
\def\[{\left[}
\def\]{\right]}
\def\<{\langle}
\def\>{\rangle}
\def\fl#1{\left\lfloor#1\right\rfloor}
\def\rf#1{\left\lceil#1\right\rceil}
\def\cA{{\mathcal A}}
\def\cB{{\mathcal B}}
\def\cC{{\mathcal C}}
\def\cD{{\mathcal D}}
\def\cE{{\mathcal E}}
\def\cF{{\mathcal F}}
\def\cG{{\mathcal G}}
\def\cH{{\mathcal H}}
\def\cI{{\mathcal I}}
\def\cJ{{\mathcal J}}
\def\cK{{\mathcal K}}
\def\cL{{\mathcal L}}
\def\cM{{\mathcal M}}
\def\cN{{\mathcal N}}
\def\cO{{\mathcal O}}
\def\cP{{\mathcal P}}
\def\cQ{{\mathcal Q}}
\def\cR{{\mathcal R}}
\def\cS{{\mathcal S}}
\def\cT{{\mathcal T}}
\def\cU{{\mathcal U}}
\def\cV{{\mathcal V}}
\def\cW{{\mathcal W}}
\def\cX{{\mathcal X}}
\def\cY{{\mathcal Y}}
\def\cZ{{\mathcal Z}}
\def\N{{\mathbb N}}
\def\Z{{\mathbb Z}}

\def\eps{\varepsilon}
\def\mand{\qquad\text{and}\qquad}
\def\tS{S^*}
\def\tsigma{\widetilde \sigma}
\def\C{{\mathbb C}}
\def\F{{\mathbb F}}
\def\Fp{\F_p}
\def\E{{\mathcal E}}
\def\e{{\mathbf e}}
\def\ep{\e_p}
\def\emm{\e_m}
\def\O{{\cO}}
\def\x{{\bf x}}
\def\y{{\bf y}}

\def\flm#1{{\left\lfloor#1\right\rfloor}_m}
\def \lrs{linear recurrence sequence}
\def \va {{\mathbf a}}
\def \vb {{\mathbf b}}
\def \vc {{\mathbf c}}
\def \ve {{\mathbf e}}
\def \vs {{\mathbf s}}
\def \vx{{\mathbf x}}
\def \vy{{\mathbf y}}
\def \vr {{\mathbf r}}
\def \vv {{\mathbf v}}
\def \vu{{\mathbf u}}
\def \vw{{\mathbf w}}
\def \vz {{\mathbf z}}
\def\eq{{\mathbf{\,e}}_q}

\def \bP{{\mathbf P}}

\newcommand{\comm}[1]{\marginpar{%
\vskip-\baselineskip 
\raggedright\footnotesize
\itshape\hrule\smallskip#1\par\smallskip\hrule}}



\title{On the Distribution of the Subset Sum
 Pseudorandom Number Generator on Elliptic Curves}

\author{
{\sc Simon R. Blackburn} \\
{Department of Mathematics}\\
{Royal Holloway University of London}\\
{Egham, Surrey, TW20 0EX, UK} \\
{\tt  s.blackburn@rhul.ac.uk}\\
\and
{\sc Alina Ostafe} \\
{Department of Computing}\\
{Macquarie University}\\
{Sydney, NSW 2109,
Australia}\\
{\tt alina.ostafe@mq.edu.au}
\and
{\sc Igor E. Shparlinski}\\
           {Department of Computing}\\
{Macquarie University}\\
{ Sydney, NSW 2109,
Australia}\\
{\tt igor.shparlinski@mq.edu.au} }
\date{}

\maketitle


\paragraph{Keywords:}\quad Pseudorandom numbers, Subset sum problem, Knapsack, Exponential
sums

\paragraph{MSC 2010:}\quad Primary 11K45,  11T71;  Secondary 11G05, 11T23, 65C05, 94A60

\newpage

\begin{abstract} Given a prime $p$, an elliptic curve $\E/\F_p$
over the finite field $\F_p$ of $p$ elements and a
binary \lrs\ $\(u(n)\)_{n =1}^\infty$
of order~$r$, we study the distribution of  the sequence of points
$$
\sum_{j=0}^{r-1} u(n+j)P_j, \qquad n =1,\ldots, N,
$$
on average over all possible choices of $\F_p$-rational points
$P_1,\ldots, P_r$ on~$\E$.  For a sufficiently large  $N$ we improve
and generalise a previous result in this direction due
to E.~El~Mahassni.
\end{abstract}

\section{Introduction}

The \textit{knapsack generator} or \textit{subset sum generator} is a
pseudorandom number generator introduced by Rueppel and
Massey~\cite{RuMa} and studied
in~\cite{Ruep1}; see also~\cite[Section~6.3.2]{MOV}
and~\cite[Section~3.7.9]{Ruep2}. It is defined as follows.  For an
integer $m\ge 1$ we denote by $\Z_m$ the residue ring modulo $m$. Let
$\(u(n)\)_{n =1}^\infty$ be a \lrs\ of order $r$ over the field of two
elements $\F_2$, see~\cite[Chapter~8]{LN}. Given an $r$-dimensional
vector $\vz = (z_0,\ldots\,, z_{r-1}) \in \Z_m^r$ of weights, we
generate a sequence of pseudorandom elements of $\Z_m$ by
\begin{equation}
\label{eq:SSG Zm}
\sum_{j=0}^{r-1}  u(n+j) z_j, \qquad n = 1, 2, \ldots .
\end{equation}

For cryptographic applications, it is usually
recommended to use a \lrs\ of maximal
period $\tau = 2^r-1$ and also the modulus $m = 2^r$.
Although the
results of~\cite{vzGaSh1,KnMe} suggest that this generator should be used
with care, no major attack against it is known.
In~\cite{ConShp,vzGaSh2}
results on the joint uniform  distribution of several consecutive
elements of this generator   have been
obtained (on average over all $r$-dimensional vectors
 $\vz = (z_0,\ldots\,, z_{r-1}) \in \Z_m^r$).

El~Mahassni~\cite{ElMa}
has recently considered the {\it elliptic curve subset sum\/}
generator and obtained some uniformity of distribution
results for this generator.  More precisely, let $p$ be a prime and
let $\E$ be an elliptic curve over the finite field $\F_p$ of $p$
elements.  Following~\cite{ElMa}, given a vector $\bP = (P_0, \ldots,
P_{r-1}) \in \E(\F_p)^r$ of $r$ points from the group $\E(\F_p)$ of
$\F_p$-rational points on $\E$ (see~\cite{Silv} for a background on
elliptic curves), we define the sequence:
\begin{equation}
\label{eq:SSG EC}
V_{\bP}(n)= \sum_{j=0}^{r-1}  u(n+j) P_j, \qquad n = 1, 2, \ldots,
\end{equation}
where the summation symbol refers to the group operation on $\E$; see
also~\cite{vzGaSh1}. If we fix any function $f: \E(\F_p) \to \F_p$, we
can define the output of the elliptic curve subset sum generator to be
the sequence $(f(V_{\bP}(n)))$. One of the simplest and most natural
choices for the function $f$ has been considered in~\cite{ElMa}, namely
$f(P) = x(P)$, the $x$-coordinate of any affine point
$P \in \E(\F_p)$. (We can define $x(O) = 0$ for the point at infinity $O$.)
With this choice for the function $f$, it is known~\cite{ElMa} that for almost all choices
of $\bP = (P_0, \ldots, P_{r-1}) \in \E(\F_p)^ s$, the sequence
$x\(V_{\bP}(n)\)/p$, $n=1,\ldots, N$, is uniformly distributed modulo
$1$ for a wide range of $N$.

In this paper we improve the result of~\cite{ElMa} on the distribution
of the sequence $x\(V_{\bP}(n)\)/p$, $n=1,\ldots, N$, in the case when
$N$ is sufficiently large, by adding some combinatorial arguments
to the existing techniques. We also establish results on the
distribution of the $s$-dimensional vectors
\begin{equation}
\label{eq:s-vec}
\(\frac{x\(V_{\bP}(n)\)}{p},\ldots, \frac{x\(V_{\bP}(n+s-1)\)}{p}\), \qquad n = 1,   \ldots, N,
\end{equation}
for any $s \ge 2$. (Note that we always assume that $\F_p$ is
represented by the set $\{0, \ldots, p-1\}$, so the
vectors~\eqref{eq:s-vec} belong to the $s$-dimensional unit cube.) The
methods in~\cite{ElMa} do not seem to extend to this case.  We note
that for small values of $N$ the results of~\cite{ElMa} remain the
only ones known for the elliptic curve subset sum generator. In
particular, full analogues of the results of~\cite{ConShp} are still
not known.

Throughout the paper, the implied constants in symbols `$O$' and
`$\ll$'  may depend on the integer parameter $s$. We recall that $U\ll V$
and $U = O(V)$ are both equivalent to the inequality $|U|\le cV$ with some
constant $c> 0$.

\section{Preliminaries}

\subsection{Discrepancy and Exponential Sums}

For a real $z$ and an integer $m \ge 1$ we use the notation
$$\e(z) = \exp(2\pi i z) \mand \emm(z) = \exp(2 \pi i z/m).
$$

For a sequence of $N$ points
\begin{equation}
\label{eq:Seq}
\Gamma = \(\gamma_{0,n}, \ldots\,,
\gamma_{s - 1,n} \)_{n=1}^N
\end{equation}
in the $s$-dimensional unit cube, we denote its {\it discrepancy\/} by
$D_\Gamma$.
That is,
$$D_\Gamma = \sup_{B \subseteq [0,1)^{s}}
\left|\frac{\cT_\Gamma(B)} {N} - |B|\right|,$$
where $\cT_\Gamma(B)$ is the number of points of the sequence $\Gamma$
in the box
$$B = [\alpha_0, \beta_0) \times \ldots \times [\alpha_{s - 1},
\beta_{s - 1})
\subseteq [0,1)^{s}$$
of volume $|B|$
and the supremum is taken over all such boxes.

As we have mentioned, one of our basic tools to study the uniformity of
distribution is the
Koksma--Sz\"usz inequality, which we present in a slightly weaker form
than that  given by Theorem~1.21
of~\cite{DrTi}.

For an integer vector $\va = (a_0, \ldots\,, a_{s-1}) \in \Z^{s}$ we define
$$
|\va| = \max_{\nu = 0, \ldots\,, s-1} |a_\nu|, \qquad
r(\va) = \prod_{\nu=0}^{s-1} \max\{|a_\nu|,\, 1\}.
$$

\begin{lemma}
\label{le:K-S} For any
integer $L > 1$ and any sequence $\Gamma$ of $N$
points~\eqref{eq:Seq} for the discrepancy $D_\Gamma$ we
have
$$D_\Gamma \ll  \frac{1}{L}
+ \frac{1}{N}\sum_{0 < |\va| < L } \frac{1}{r(\va)}
\left| \sum_{n=1}^N \e \(\sum_{\nu=0}^{s - 1}a_\nu\gamma_{\nu,n} \)\right|,
$$
where the sum is taken over all integer vectors
$\va = (a_0, \ldots\,, a_{s - 1}) \in \Z^{s}$
with $0 < |\va| < L$.
\end{lemma}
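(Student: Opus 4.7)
\medskip

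\noindent\textbf{Proof plan.} The plan is to derive the inequality from the classical Erd\H{o}s--Tur\'an--Koksma machinery by approximating the indicator function of a box $B = \prod_{\nu=0}^{s-1}[\alpha_\nu,\beta_\nu)$ by one-sided trigonometric polynomials. First I would, for each coordinate $\nu$, choose trigonometric polynomials $F_\nu^{\pm}(x)$ of degree at most $L$ in the single variable $x \in [0,1)$ such that
$$
F_\nu^-(x) \le \chi_{[\alpha_\nu,\beta_\nu)}(x) \le F_\nu^+(x),
$$
with constant Fourier coefficient $\beta_\nu - \alpha_\nu + O(1/L)$ and with the remaining Fourier coefficients $\widehat{F_\nu^{\pm}}(k)$ satisfying $|\widehat{F_\nu^{\pm}}(k)| \ll 1/\max\{|k|,1\}$ uniformly in the interval endpoints. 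The existence of such polynomials (for instance, Vaaler's or Selberg's construction) is the standard input for this kind of estimate, and I would quote it rather than reconstruct it.

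Next I would form the tensor-product polynomials $F^{\pm}(\mathbf{x}) = \prod_{\nu=0}^{s-1} F_\nu^{\pm_\nu}(x_\nu)$, choosing signs so as to get upper and lower bounds for $\chi_B(\mathbf{x})$. Summing over the sequence $\Gamma$ and dividing by $N$ yields
$$
\frac{\cT_\Gamma(B)}{N} - |B|
 = \sum_{0 < |\va| < L} \widehat{F^{\pm}}(\va) \cdot \frac{1}{N}\sum_{n=1}^N \e\!\left(\sum_{\nu=0}^{s-1} a_\nu \gamma_{\nu,n}\right) + O(1/L),
$$
where the $O(1/L)$ collects the discrepancy between $|B|$ and the integral of $F^{\pm}$, which is itself $O(1/L)$ by the one-dimensional choice above and telescoping over the $s$ coordinates (recall $s$ is fixed, so factors depending on $s$ are absorbed into the implied constants).

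Then I would bound the Fourier coefficients of the product: since each one-dimensional factor satisfies $|\widehat{F_\nu^{\pm}}(a_\nu)| \ll 1/\max\{|a_\nu|,1\}$, we get
$$
|\widehat{F^{\pm}}(\va)| \ll \prod_{\nu=0}^{s-1} \frac{1}{\max\{|a_\nu|,1\}} = \frac{1}{r(\va)}.
$$
Inserting this into the identity above, taking absolute values, and noting that the sum over $\va$ can be restricted to $0 < |\va| < L$ (the degree bound of $F^{\pm}$) yields the required estimate after taking the supremum over boxes $B$.

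The main potential obstacle is purely technical: producing the one-sided trigonometric approximants with both the correct integral error $O(1/L)$ and the right decay of Fourier coefficients. This construction is classical and is precisely what Theorem~1.21 of~\cite{DrTi} packages; once it is in hand the rest of the argument reduces to the product expansion and coefficient bookkeeping above, with all implicit constants depending only on $s$.
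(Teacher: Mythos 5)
The paper does not prove this lemma at all: it is quoted as a known result, namely (a slightly weakened form of) Theorem~1.21 of Drmota and Tichy \cite{DrTi}, the Koksma--Sz\"usz (Erd\H{o}s--Tur\'an--Koksma) inequality. So there is no internal proof to compare against; your proposal is a sketch of the standard proof of the cited theorem, and in outline it is the right argument --- Selberg/Vaaler one-sided approximants in each coordinate, tensor products, and the Fourier coefficient bound $|\widehat{F^{\pm}}(\va)| \ll 1/r(\va)$ with the sum truncated at $|\va| < L$ by the degree bound.

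One point in your sketch deserves more care than the phrase ``choosing signs so as to get upper and lower bounds'' suggests. The product $\prod_{\nu} F_\nu^{+}(x_\nu)$ is indeed a majorant of $\chi_B$, since each $F_\nu^{+} \ge \chi_{[\alpha_\nu,\beta_\nu)} \ge 0$. But $\prod_{\nu} F_\nu^{-}(x_\nu)$ is \emph{not} automatically a minorant of $\chi_B$, because the one-sided minorants $F_\nu^{-}$ take negative values; a product of lower bounds for nonnegative quantities need not be a lower bound for the product. The standard repair is the telescoping identity
$$
\prod_{\nu} \chi_\nu - \prod_{\nu} F_\nu^{-} \;=\; \sum_{k} \Bigl(\prod_{\nu<k} \chi_\nu\Bigr)\bigl(\chi_k - F_k^{-}\bigr)\Bigl(\prod_{\nu>k} F_\nu^{-}\Bigr),
$$
combined with the bound $0 \le \chi_k - F_k^{-} \le F_k^{+} - F_k^{-}$, whose mean value is $O(1/L)$; this is where the error term $1/L$ and the restriction to $s$ fixed genuinely enter. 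You gesture at ``telescoping over the $s$ coordinates'' but attach it to the integral error rather than to the minorant problem, which is the actual place it is needed. Since you ultimately defer the construction to the same reference the authors cite, your treatment is consistent with the paper's; just be aware that the multidimensional minorant is the one step that is not pure bookkeeping.
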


For estimation of  the corresponding exponential sums
with various  sequences of pseudorandom numbers,
the following special case of the bound of Bombieri~\cite{Bomb} is used.

\begin{lemma}
\label{lem:Bomb}
For  any rational function $f(X,Y) \in
\F_p(X,Y)$ of degree $d$ which
is not constant on an elliptic $\E$ over $\F_p$, the  bound
$$
\sum_{P \in \E(\F_p)}\hskip -25pt {\phantom{\sum}}^*\, \ep\(f(Q)\) \ll  d p^{1/2}
$$
holds,  where  $\sum{}^*$ means the the poles of $f(X,Y)$
are excluded from the summation.
\end{lemma}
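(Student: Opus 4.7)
The statement is a direct application of Bombieri's estimate~\cite{Bomb} for character sums along algebraic curves, and the plan is simply to check that its hypotheses are met. I would view $\E$ as a smooth projective irreducible curve of genus $g=1$ over $\F_p$, and let $\widetilde f$ denote the restriction of $f(X,Y)$ to $\E$; by hypothesis $\widetilde f$ is a nonconstant element of the function field $\F_p(\E)$. The polar divisor $D_\infty$ of $\widetilde f$ on $\E$ then has degree $\ll d$, since, by B\'ezout, the polar locus of $f$ on the projective plane meets $\E$ in only $O(d)$ points counted with multiplicity.

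Bombieri's theorem asserts that, provided $\widetilde f$ is not of the Artin-Schreier form $h^p - h + c$ on $\E$, one has
$$\left|\sum_{P\in\E(\F_p)}{}^{*}\,\ep(f(P))\right| \le \(2g - 2 + \deg D_\infty\)\sqrt{p}.$$
Since $g=1$ and $\deg D_\infty \ll d$, this yields the claimed bound $O(dp^{1/2})$ at once.

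The main obstacle is verifying the genericity hypothesis $\widetilde f \neq h^p - h + c$. When $d<p$ this is automatic, because any nontrivial Artin-Schreier representation forces each polar order of $\widetilde f$ at every pole on $\E$ to be divisible by $p$, contradicting $\deg D_\infty \ll d < p$ for $p$ sufficiently large. In the uses of Lemma~\ref{lem:Bomb} elsewhere in this paper, $d$ is bounded by a quantity depending only on the fixed parameter $s$, so the hypothesis holds for all but finitely many primes; the remaining primes can be absorbed into the implicit constant via the trivial estimate $|\E(\F_p)| = p + O(\sqrt{p})$, which is why the conclusion is stated with an implied constant rather than an explicit one.
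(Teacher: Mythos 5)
The paper gives no proof of this lemma at all: it is quoted verbatim as "the following special case of the bound of Bombieri~\cite{Bomb}", so your task was really to verify that Bombieri's theorem applies, and you do this correctly — genus $1$, polar divisor of degree $O(d)$ by B\'ezout, and exclusion of the Artin--Schreier case $h^p-h+c$ because any such nonconstant function has every polar multiplicity divisible by $p$, hence polar degree at least $p$, which is incompatible with $\deg D_\infty \ll d$ once $p$ exceeds a constant multiple of $d$. Your closing remark handles the leftover range correctly (indeed, whenever the exception could occur one has $p \ll d$, so the trivial bound $|\E(\F_p)| \ll p \ll d p^{1/2}$ already suffices), so the argument is complete and consistent with how the lemma is used in the paper, where $d$ is bounded in terms of $s$ alone.
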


We need the orthogonality relation:
\begin{equation}
\label{eq:ident}
\sum_{\eta =0}^{m-1} \emm( \eta \lambda) =
\left\{ \begin{array}{ll}
0,& \quad \mbox{if}\ \lambda\not \equiv 0 \pmod m, \\
m,& \quad \mbox{if}\  \lambda \equiv 0 \pmod m.
\end{array} \right.
\end{equation}
We also make use of the  inequality (which is immediate
from~\cite[Bound~(8.6)]{IwKow})
\begin{equation}
\label{eq:plogp}
\sum_{\eta =0}^{m-1}
\left| \sum_{\lambda=1}^M
\emm\(\eta \lambda\) \right|\ll m \log m,
\end{equation}
which holds for any integers $m$ and  $M$ with $1 \le M \le m$.

\subsection{Combinatorial Estimates}

Let $r$ and $s$ be positive integers such that $s\leq r$.
Write $\ve_1,\ldots,\ve_s$ for the standard orthogonal basis vectors of length $s$
and let $\mathbf{0}_s = (0,\ldots, 0)$ be the $s$-dimensional zero vector. We say
that  a
pair of  $r$-dimensional binary vectors $\vx=(x_0, \ldots, x_{r-1})$
and  $\vy=(y_0, \ldots, y_{r-1})$ is {\it $s$-good\/} if for all $h=1, \ldots, s$,
there exists at least one  pair $(i,j)$,
$0\le i,j\le r-s$  such that
$$
(x_i,x_{i+1},\ldots,x_{i+s-1})=\ve_h, \qquad (x_j,x_{j+1},\ldots,x_{j+s-1})=\mathbf{0}_s
$$
and
$$
(y_i,y_{i+1},\ldots,y_{i+s-1})=\mathbf{0}_s,  \qquad  (y_j,y_{j+1},\ldots,y_{j+s-1}) = \ve_h.
$$
We say that a pair $(\vx,\vy)$ is \emph{$s$-bad} if it is not $s$-good. We wish to obtain a bound on the number $f_s(r)$ of $s$-bad pairs of vectors of length $r$.

\begin{lemma}
\label{lem:Comp}
Let $s$ be a fixed positive integer. The number $f_s(r)$ of $s$-bad pairs of binary vectors of length $r$ is at most
$$
f_s(r)\le 2s 4^{s-1}\alpha_s ^r$$
where
$$
\alpha_s=\(4^{s}-1\)^{1/s}.
$$
\end{lemma}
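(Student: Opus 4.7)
The plan is to bound $f_s(r)$ by two successive union bounds followed by a standard pattern-avoidance estimate for words over a four-letter alphabet. A pair $(\vx,\vy)$ is $s$-bad precisely when, for some $h \in \{1,\ldots,s\}$, either (i) there is no index $i\le r-s$ with $(x_i,\ldots,x_{i+s-1})=\ve_h$ and $(y_i,\ldots,y_{i+s-1})=\mathbf{0}_s$, or (ii) there is no index $j\le r-s$ with $(x_j,\ldots,x_{j+s-1})=\mathbf{0}_s$ and $(y_j,\ldots,y_{j+s-1})=\ve_h$. Applying the union bound first over $h$ and then over the two disjuncts (i), (ii), and noticing that swapping $\vx$ and $\vy$ gives a bijection between the two classes, the problem reduces to bounding, for a single $h$, the number $N_h$ of pairs for which (i) fails.

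To estimate $N_h$, I would encode the pair $(\vx,\vy)$ as a single word of length $r$ over the alphabet $\Sigma=\{0,1\}^2$ of size $4$, whose $n$-th letter is $(x_n,y_n)$. The failure of (i) is exactly the statement that this word does not contain, as a substring, the fixed pattern $\pi_h\in\Sigma^s$ whose letters are all equal to $(0,0)$ except at position $h-1$, where the letter is $(1,0)$. Thus $N_h$ is at most the number of words in $\Sigma^r$ that avoid a prescribed length-$s$ substring.

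For this I would use the usual block argument. Write $r=sk+t$ with $0\le t<s$, split the word into $k$ consecutive disjoint blocks of length $s$ followed by a tail of length $t$, and observe that avoiding $\pi_h$ as a substring in particular forbids $\pi_h$ at the left end of each block. So each of the $k$ blocks has at most $4^s-1=\alpha_s^s$ legal values and the tail has at most $4^t$, giving
$$
N_h \le (4^s-1)^k\cdot 4^t = \alpha_s^{sk}\cdot 4^t = \alpha_s^r\cdot(4/\alpha_s)^t.
$$
Since $\alpha_s\ge 1$ and $0\le t\le s-1$, we have $(4/\alpha_s)^t\le 4^{s-1}$, whence $N_h \le 4^{s-1}\alpha_s^r$. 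Combining this with the two outer union bounds then yields $f_s(r)\le 2s\cdot 4^{s-1}\alpha_s^r$, as desired.

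The only slightly delicate point is the treatment of the tail of length $t<s$, which is what forces the constant $4^{s-1}$ into the bound; everything else is a routine reduction or bookkeeping, and no deeper combinatorial identity is needed.
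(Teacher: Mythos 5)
Your proof is correct and follows essentially the same route as the paper's: a union bound over the $s$ choices of $h$ and the $\vx$/$\vy$ symmetry (giving the factor $2s$), followed by counting pairs avoiding the pattern $(\ve_h,\mathbf{0}_s)$ by splitting into $\fl{r/s}$ disjoint blocks of length $s$ with at most $4^s-1$ values each, plus a tail contributing $4^{s-1}$. Your rephrasing as substring avoidance in a word over a four-letter alphabet is only cosmetic; the counting is identical.
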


\begin{proof}
We say that a pair $(\vx,\vy)$ is \emph{$(s,h)$-bad with respect to
  $\vx$} if there exists no integer $i$ with $0\leq i\leq r-s$ such
that $(x_i,x_{i+1},\ldots,x_{i+s-1})=\ve_h$ and
$(y_i,y_{i+1},\ldots,y_{i+s-1})=\mathbf{0}$.  Furthermore, we say that
a pair $(\vx,\vy)$ is \emph{$s$-bad with respect to $\vx$} if and only
if it is $(s,h)$-bad with respect to $\vx$ for some $h$.
Note that a pair $(\vx,\vy)$ is
$s$-bad if and only if for some $h$ the pair is $(s,h)$-bad with
respect to either $\vx$ or $\vy$.

Since there are at most $s$
possibilities for $h$, and the roles of $\vx$ and $\vy$ in the
definition of $s$-bad pairs are completely symmetrical, our bound
follows if we can prove that for any $s$ and $h$ the number of
$(s,h)$-bad pairs with respect to $\vx$ is at most
$4^{s-1}\alpha_s^r$.

Let $h$ be fixed. We bound the number of $(s,h)$-bad pairs $(\vx,\vy)$
with respect to $\vx$ as follows.  For an integer
$m=0,\ldots,\fl{r/s}-1$, there are at most $2^{2s}-1$ possibilities
for the pair
$$
((x_{ms}, x_{ms+1},\ldots,x_{ms+s-1}),(y_{ms}, y_{ms+1},\ldots,y_{ms+s-1}))
$$
of subsequences, since this pair of subsequences cannot be equal to $(\ve_h,\mathbf{0})$. So there are at most $(2^{2s}-1)^{\fl{r/s}}\leq(2^{2s}-1)^{r/s}=\alpha_s^r$ possibilities for the pair
$$
((x_0,x_1,\ldots,x_{\fl{r/s} s-1}),(y_0,y_1,\ldots,y_{\fl{r/s} s-1})).
$$ But $r-\fl{r/s} s\leq s-1$, and so there are at most $4^{s-1}$
possibilities for the last $r-\fl{r/s} s$ positions of $\vx$ and
$\vy$. This establishes our bound.
\end{proof}

In particular, since $\alpha_s<4$, we see from
Lemma~\ref{lem:Comp} that $f_s(r)=o(4^r)$ as $r\rightarrow\infty$
with $s$ fixed (and
so $s$-bad pairs are asymptotically rare).

We remark that it is not too difficult to see that $f_s(r)$ is bounded
below by $c_s\beta_s^r$ for some positive constants $c_s>0$ and
$\beta_s$ depending only on $s$. To see this we may use the
Perron--Frobenius Theorem, together with the fact that the number of
$(s,h)$-bad pairs with respect to $\vx$ is equal to the number of
walks of length $r-s$ in a certain directed graph (namely the tensor
product of two copies of a span $s$ binary de Bruijn graph, with a
single vertex removed). Indeed, for small values of $s$ computer
calculations based on this framework show that $f_s(r)\sim
c_s\beta_s^r$ where the value of $\beta_s$ is given in the following
table (to 5 decimal places), with the value of $\alpha_s$ given by our
upper bound included for comparison:
$$
\begin{array}{c|ccccc}
s&2&3&4&5&6\\\hline
\alpha_s&3.87298& 3.97906& 3.99609& 3.99922& 3.99984\\
\beta_s&3.73205&3.93947&3.98444&3.99615&3.99903
\end{array}
$$
The computer calculations show that the pairs that are $(s,h)$-bad
where $h=\lfloor (s-1)/2\rfloor$ and $h=\lceil (s-1)/2\rceil$ provide the dominant term for
$f_s(r)$ for $s\leq 6$.

\section{Main Result}

\subsection{One Dimensional Distribution}

For $\bP = (P_0, \ldots, P_{r-1}) \in \E(\F_p)^r$, we denote by $D_\bP(N)$ the discrepancy
of the points
$$
\(\frac{x(V_\bP(n))}{p}\), \qquad n =1,
\ldots, N.
$$

\begin{theorem}
\label{th:D} Let the \lrs\ $\(u(n)\)_{n =1}^\infty$ be
purely periodic with period $\tau$ and order $r = O(p^{1/2})$ and let its
characteristic polynomial be irreducible over $\F_2$.
Then for any $\delta >0$, and for all
except $O(\delta p^r)$ choices for $\bP \in \E(\F_p)^r$,
for all $1 \le N \le \tau$, we have
$$D_\bP(N) \ll \delta^{-1}\(N^{-1/2}+3^{r/2}N^{-1} p^{-1/4}+ p^{-1/2}\)(\log \tau)^2 \log p.$$
\end{theorem}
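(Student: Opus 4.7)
The plan is to apply the Koksma--Sz\"usz inequality (Lemma~\ref{le:K-S}) in dimension $s=1$ with a parameter $L$ in order to reduce the estimate of $D_\bP(N)$ to a weighted $1/a$-sum of the exponential sums
\begin{equation*}
W_a(\bP)=\sum_{n=1}^{N}\ep\bigl(a\,x(V_\bP(n))\bigr).
\end{equation*}
To obtain a statement valid for almost all $\bP\in\E(\F_p)^r$, I shall bound the mean square $\sum_{\bP}|W_a(\bP)|^2$ uniformly in $a$, and then combine Cauchy--Schwarz in $\bP$ with Markov's inequality to exclude an exceptional set of size $O(\delta p^r)$.

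Expanding the square and setting $\vx=(u(n_1),\ldots,u(n_1+r-1))$, $\vy=(u(n_2),\ldots,u(n_2+r-1))$, I split the outer double sum into three parts according to the structure of $(\vx,\vy)$: (i) the diagonal $n_1=n_2$; (ii) off-diagonal pairs for which $(\vx,\vy)$ is $1$-good in the sense of Lemma~\ref{lem:Comp}; and (iii) off-diagonal pairs for which $(\vx,\vy)$ is $1$-bad. In (i) the inner sum is trivially $p^r$, contributing $Np^r$. In (ii) the $1$-good condition supplies two distinct indices $i,j$ along which $\vx$ and $\vy$ realise opposite $(\ve_1,\mathbf{0}_1)$-patterns, so $P_i$ appears in $V_\bP(n_1)$ but not in $V_\bP(n_2)$, and $P_j$ appears in $V_\bP(n_2)$ but not in $V_\bP(n_1)$. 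The inner character sum therefore factors as a product of two one-variable sums to which Lemma~\ref{lem:Bomb} applies independently, yielding $O(p^{r-1})$ per pair and $O(N^2 p^{r-1})$ in total. In (iii) the condition $\vx\ne\vy$ still forces at least one of the two index patterns to occur, so a single application of Lemma~\ref{lem:Bomb} gives $O(p^{r-1/2})$ per pair; the purely periodic irreducibility hypothesis makes $n\mapsto\vx_n$ injective on $\{1,\ldots,\tau\}$, so Lemma~\ref{lem:Comp} bounds the number of such pairs by $f_1(r)=O(3^r)$, contributing $O(3^r p^{r-1/2})$. Altogether
\begin{equation*}
\sum_{\bP}|W_a(\bP)|^2\ll Np^r + N^2 p^{r-1} + 3^r p^{r-1/2}.
\end{equation*}

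Taking square roots, applying Cauchy--Schwarz in $\bP$, summing over $1\le a<L$ with weight $1/a$ (losing a factor $\log L$), and invoking Markov's inequality then shows that for all but $O(\delta p^r)$ choices of $\bP$,
\begin{equation*}
\frac{1}{N}\sum_{a=1}^{L-1}\frac{|W_a(\bP)|}{a}\ll \delta^{-1}(\log L)\bigl(N^{-1/2}+3^{r/2}N^{-1}p^{-1/4}+p^{-1/2}\bigr).
\end{equation*}
Choosing $L$ of order $p$ and combining with the $1/L$ term from Lemma~\ref{le:K-S} gives the three main terms of the theorem, while the additional $(\log\tau)^2$ factor absorbs losses from any standard completion-of-sums step via~\eqref{eq:plogp} used when passing between the incomplete range $\{1,\ldots,N\}$ and the full period $\tau$ in the counting of pairs. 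The principal obstacle, and the reason for introducing Lemma~\ref{lem:Comp}, is obtaining the full $p^{r-1}$ saving in case (ii): one has to locate \emph{two} disjoint coordinates along which $V_\bP(n_1)$ and $V_\bP(n_2)$ depend on independent point variables, for otherwise only a single $p^{1/2}$ saving is available and the final bound would degrade from $p^{-1/2}$ to $p^{-1/4}$.
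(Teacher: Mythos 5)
Your decomposition of the mean square $\sum_{\bP}|W_a(\bP)|^2$ into the diagonal, the $1$-good off-diagonal pairs (two independent applications of Lemma~\ref{lem:Bomb}, giving $O(N^2p^{r-1})$) and the $1$-bad off-diagonal pairs (one application, with the count controlled by $f_1(r)\ll 3^r$ from Lemma~\ref{lem:Comp}, giving $O(3^rp^{r-1/2})$) is exactly the paper's argument, including the correct identification of why two disjoint coordinates are needed for the full $p^{r-1}$ saving. That part is sound.

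There is, however, a genuine gap in how you pass from the mean-square bound to the theorem: the statement requires a \emph{single} exceptional set of size $O(\delta p^r)$ outside of which the discrepancy bound holds \emph{for all} $1\le N\le\tau$ simultaneously. Your Markov step, as written, is carried out for a fixed $N$ and produces an $N$-dependent exceptional set; a naive union over all $N\le\tau$ would inflate the exceptional set to $O(\delta\tau p^r)$. Your proposed fix --- completing the sum from $\{1,\ldots,N\}$ to the full period $\tau$ and letting $(\log\tau)^2$ absorb the loss --- does not work: completing all the way to $\tau$ replaces $N$ by $\tau$ in the mean-square bound and degrades the leading term from $N^{-1/2}$ to $\tau^{1/2}N^{-1}$, which is far weaker when $N$ is much smaller than $\tau$. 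The paper's device is a dyadic completion: set $N_k=\min\{2^k,\tau\}$ and complete the sum over $n$ only up to the $N_k$ with $N_{k-1}<N\le N_k$, using~\eqref{eq:ident} and~\eqref{eq:plogp} (this costs one factor $\log N_k\le\log\tau$ plus the $\log p$ from the $a$-sum). One then applies Markov separately for each of the $\rf{\log\tau}$ values of $k$, with the threshold inflated by a further factor $\log\tau$ so that each $k$ contributes $O(\delta p^r/\log\tau)$ to the exceptional set; the union over $k$ is then $O(\delta p^r)$ and covers every $N\le\tau$ since $N_k\le 2N$. This is precisely the origin of the $(\log\tau)^2$ factor in the statement, and without some such uniformisation step your argument proves a weaker, fixed-$N$ version of the theorem.
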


\begin{proof}
{From} Lemma~\ref{le:K-S}, used with $L = p$,  we derive
$$D_\bP(N) \ll \frac{1}{p} + \frac{1}{N}\sum_{0 < |a| < p}
\frac{1}{|a|}
\left|\sum_{n=1}^N
\e_p\(ax(V_\bP(n))\)\right|.$$
Let $N_\mu = \min\{2^{\mu}, \tau\}$, $\mu =0, 1, \ldots$. Define $k$ by the inequality
$N_{k-1} < N \le N_k$, that is, $k=\rf{\log_2N}$.
Then from~\eqref{eq:ident} we derive
\begin{eqnarray*}
\sum_{n=1}^N \e_p\(ax(V_\bP(n))\) = \frac{1}{N_k} \sum_{n=1}^{N_k}\sum_{\lambda=1}^N
\sum_{\eta=0}^{N_k}
\e_p\(ax(V_\bP(n))\) \e_{N_k}\(\eta(n-\lambda)\).
\end{eqnarray*}
Hence,
\begin{equation}
\label{DviaSigna}
D_\bP(N)  \ll \frac{1}{p} + \frac{1}{N N_k} \Delta_\bP(k)
\end{equation}
where
$$
\Delta_\bP(k) = \sum_{0 < |a| < p} \frac{1}{|a|}
\sum_{\eta=0}^{N_k} \left|\sum_{\lambda=1}^N \e_{N_k}\(-\eta\lambda\)\right| \left| \sum_{n=1}^{N_k}
\e_p\(ax(V_\bP(n))\) \e_{N_k}\(\eta n\)\right|.
$$
The celebrated Hasse bound shows that
\begin{equation}
\label{eq:Hasse}
\(\# \E(\F_p)\)^r \le (p^{1/2} + 1)^{2r} = O(p^r)
\end{equation}
as we have assumed that $r = O(p^{1/2})$. Applying the Cauchy inequality, we derive
\begin{eqnarray*}
\lefteqn{\( \sum_{\bP\in \E(\F_p)^r}
\left| \sum_{n=1}^{N_k}
\e_p\(ax(V_\bP(n))\)
\e_{N_k}\(\eta n\)\right|\)^2 }  \\
& & \qquad \le  (p^{1/2}+1)^{2r}\sum_{\bP\in \E(\F_p)^r} \left| \sum_{n=1}^{N_k}
\e_p\(ax(V_\bP(n))\) \e_{N_k}\(\eta n\)\right|^2 \\
& & \qquad \ll p^{r} \sum_{n,l=1}^{N_k} \e_{N_k}\(\eta (n-l)\)
\sum_{\bP\in \E(\F_p)^r}
\e_p\(a
\(x(V_\bP(n)) - x(V_\bP(l))\)\).
\end{eqnarray*}
For the case $n=l$, we estimate the inner sum trivially as
$\(\# \E(\F_p)\)^r = O(p^r)$.

We split the rest of the sum into two sums: the first over distinct $1$-bad pairs of vectors
\begin{equation}
\label{eq:pair}
\((u(n),\ldots, u(n+r-1)), (u(l)\ldots, u(l+r-1))\),
\end{equation} and the second over $1$-good
pairs of vectors~\eqref{eq:pair}.

Let $\cB_r$ be the set of pairs of indices $(n,l)$ such that the pair of vectors~\eqref{eq:pair}
is $1$-bad, that is the set of vectors for which $u(n+i)\ge u(l+i)$ for all $i=0,\ldots,r-1$,
or $u(n+i)\le u(l+i)$ for all $i=0,\ldots,r-1$. As the vectors are distinct, there exists an index $i=0,\ldots,r-1$ such that we have for example $u(n+i)> u(l+i)$, which means that $V_\bP(l)$ does not depend on the point $P_i$. The Bombieri bound given by Lemma~\ref{lem:Bomb} in the case when $f(X,Y)=X$ shows that for any fixed $c_1\in \E(\F_p)$ and $c_2\in\F_p$
$$
\sum_{P\in\E(\F_p)}\e_p(ax(c_1+P)+c_2) \ll 1+\sum_{P\in\E(\F_p), P\not=-c_1}\e_p(ax(c_1+P_i)+c_2)=O(p^{1/2}).
$$
So we bound our inner sum by summing over the point $P_i\in\E(\F_p)$ to obtain
\begin{equation*}
\begin{split}
\sum_{\bP\in \E(\F_p)^r} &
\e_p\(a
\(x(V_\bP(n)) - x(V_\bP(l))\)\)\\
=&\sum_{\bP_i\in \E(\F_p)^{r-1}}\sum_{P_i\in \E(\F_p)}
\e_p\(a
\(x(F_n(\bP_i)+P_i) - x(F_l(\bP_i))\)\)=O(p^{r-1/2}),
\end{split}
\end{equation*}
where $\bP_i$ is the vector obtained from $\bP$ by removing the point
$P_i$, and $F_m(P_i) \in \E(\F_p)$ denotes a point on $\E$ that depends
only on $m$ and $\bP_i$.

It remains to consider the case of $n,l\not\in\cB_r$. In this case,
there exist two indices $i,j=0,\ldots, r-1$ such that we
have for example $u(n+i)>u(l+i)$ and $u(n+j)<u(l+j)$.
Thus $V_\bP(l)$ does not depend on the point $P_i$ and
$V_\bP(n)$ does not depend on the point $P_j$. Using
Lemma~\ref{lem:Bomb} again, but this time applied for the sums over
the points $P_i$ and $P_j$ and~\eqref{eq:Hasse}, the inner sum becomes
\begin{eqnarray*}
\lefteqn{\sum_{\bP\in \E(\F_p)^r}
\e_p\(a
\(x(V_\bP(n)) - x(V_\bP(l))\)\)}\\
&=\sum_{\bP_{i,j}\in \E(\F_p)^{r-2}}\sum_{P_i\in \E(\F_p)}
\e_p\(a
\(x(G_n(\bP_{i,j})+P_i)\)\)\\
&\qquad\qquad  \sum_{P_j\in \E(\F_p)}
\e_p\(\(-ax(G_l(\bP_{i,j})+P_j)\)\)=O(p^{r-1}),
\end{eqnarray*}
where
$\bP_{i,j}$ is the vector obtained from $\bP$ by removing the points
$P_i$ and $P_j$, and $G_m(\bP_{i,j}) \in \E(\F_p)$ denotes a point on
$\E$ that depends only on $m$ and $\bP_{i,j}$.

Putting everything together, by Lemma~\ref{lem:Comp}, we obtain
\begin{eqnarray*}
\lefteqn{\( \sum_{\bP\in \E(\F_p)^r}
\left| \sum_{n=1}^{N_k}
\e_p\(ax(V_\bP(n))\)
\e_{N_k}\(\eta n\)\right|\)^2 }  \\
&&\qquad \ll p^{r}\(N_kp^r+p^{r-1/2}\sum_{\substack{n,l=1\\ n,l\in\cB_r}}^{N_k}1+p^{r-1}\sum_{\substack{n,l=1\\ n,l\not\in\cB_r}}^{N_k}1\)\\
&&\qquad  \ll N_kp^{2r}+3^r p^{2r-1/2}+ N_k^2p^{2r-1},
\end{eqnarray*}
and thus
\begin{eqnarray*}
\sum_{\bP\in \E(\F_p)^r}
\left| \sum_{n=1}^{N_k}
\e_p\(ax(V_\bP(n))\)
\e_{N_k}\(\eta n\)\right|  \ll N_k^{1/2}p^{r}+3^{r/2} p^{r-1/4}+ N_kp^{r-1/2}.
\end{eqnarray*}
Using~\eqref{eq:plogp}, we obtain
\begin{eqnarray*}
\sum_{\bP\in \E(\F_p)^r}\Delta_\bP(k)
&\ll& \(N_k^{1/2}p^{r}+3^{r/2} p^{r-1/4}+ N_kp^{r-1/2}\)\\
&& \qquad\qquad\qquad\sum_{0 < |a| < p} \frac{1}{|a|}
\sum_{\eta=0}^{N_k} \left|\sum_{\lambda=1}^N \e_{N_k}\(-\eta\lambda\)\right|\\
&\ll& \(N_k^{1/2}p^{r}+3^{r/2} p^{r-1/4}+ N_kp^{r-1/2}\) N_k\log N_k \log p\\
&\ll& \(N_k^{3/2}p^{r}+N_k 3^{r/2} p^{r-1/4}+ N_k^2p^{r-1/2}\)\log \tau \log p.
\end{eqnarray*}
Thus, for each $k=1,\ldots,\rf{\log \tau}$, the inequality
\begin{equation}
\label{eq:bad}
\Delta_\bP(k)\ge \delta^{-1}\(N_k^{3/2}+N_k 3^{r/2} p^{-1/4}+ N_k^2p^{-1/2}\)(\log \tau)^2 \log p
\end{equation}
holds for at most $O(\delta p^r/\log\tau)$ vectors $\bP \in \E(\F_p)^r$. Therefore, the number of vectors $\bP\in \E(\F_p)^r$ for which~\eqref{eq:bad} holds for at least one
$k=1,\ldots,\rf{\log \tau}$ is $O(\delta p^r)$. For all the other points
$\bP\in \E(\F_p)^r$, by~\eqref{DviaSigna} and taking into account that
$N_k=2N_{k-1}\le 2N$, we get
\begin{eqnarray*}
D_\bP(N)&\ll& \delta^{-1}N^{-1}\(N_k^{1/2}+3^{r/2} p^{-1/4}+ N_kp^{-1/2}\)(\log \tau)^2 \log p\\
&\ll&\delta^{-1}\(N^{-1/2}+3^{r/2}N^{-1} p^{-1/4}+ p^{-1/2}\)(\log \tau)^2 \log p,
\end{eqnarray*}
which concludes the proof.
\end{proof}

We note that El Mahassni~\cite{ElMa} obtained the bound
\begin{equation}
\label{eq:ElMa bound}
D_\bP(N) \ll \delta^{-1}\(N^{-1/2}+  p^{-1/4}\)(\log \tau)^2 \log p
\end{equation}
under the same conditions.
Say, in the most interesting case of sequences of maximal period
$\tau =2^r -1$ and $r$ chosen so that $2^r \ll p \ll  2^r$,
Theorem~\ref{th:D} gives a stronger result for
$$
\tau \ge N \ge \tau^{0.5\log 3/\log 2} = \tau^{0.79248 \ldots}.
$$

\subsection{Multidimensional Distribution}

For $\bP = (P_0, \ldots, P_{r-1}) \in \E(\F_p)^r$, we denote by $D_{\bP,s}(N)$ the $s$-dimensional discrepancy of the points~\eqref{eq:s-vec}.

\begin{theorem}
\label{th:Ds} Let the \lrs\ $\(u(n)\)_{n =1}^\infty$ be
purely periodic with period $\tau$ and order $r = O(p^{1/2})$ and let its
characteristic polynomial be irreducible over $\F_2$.
Then for any $\delta >0$, and for all except $O(\delta p^r)$ choices for $\bP \in \E(\F_p)^r$,
for all $1 \le N \le \tau$, we have
$$D_{\bP,s}(N) \ll  \delta^{-1}\(N^{-1/2}\log p+ p^{-1/2}\log p +\alpha_s^{r/2}N^{-1} (\log p)^s\)(\log \tau)^2,$$
where the implied constant depends only on $s$ and
 $\alpha_s$ is as in Lemma~\ref{lem:Comp}.
\end{theorem}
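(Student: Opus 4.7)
The plan is to follow the proof of Theorem~\ref{th:D} step by step, substituting the $s$-bad/good dichotomy of Lemma~\ref{lem:Comp} for the $1$-bad/good one and upgrading the single application of Lemma~\ref{lem:Bomb} to a pair of independent applications. I first invoke Lemma~\ref{le:K-S} with $L=p$ to reduce the discrepancy to a weighted sum, over non-zero integer vectors $\va=(a_0,\ldots,a_{s-1})$ with $|\va|<p$, of the absolute exponential sums
$$
S_{\bP}(\va)=\sum_{n=1}^N \e_p\Bigl(\sum_{\nu=0}^{s-1} a_\nu x\bigl(V_\bP(n+\nu)\bigr)\Bigr).
$$
Dyadic smoothing as in Theorem~\ref{th:D}, based on~\eqref{eq:ident} with scales $N_k=\min\{2^k,\tau\}$ and $k=\rf{\log_2 N}$, then replaces $S_\bP(\va)$ by a sum over $n\in\{1,\ldots,N_k\}$ twisted by additive characters modulo $N_k$, giving an analogue of~\eqref{DviaSigna} in terms of a quantity $\Delta_{\bP,s}(k)$.

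\textbf{Cauchy--Schwarz and the trichotomy on $(n,l)$.} Applying the Cauchy inequality over $\bP\in\E(\F_p)^r$ together with the Hasse bound~\eqref{eq:Hasse} and expanding the square reduces matters to estimating, for $n,l\in\{1,\ldots,N_k\}$, the inner sums
$$
T_{n,l}(\va)=\sum_{\bP\in\E(\F_p)^r}\e_p\Bigl(\sum_{\nu=0}^{s-1}a_\nu\bigl(x(V_\bP(n+\nu))-x(V_\bP(l+\nu))\bigr)\Bigr).
$$
I partition the $(n,l)$-pairs according to the length-$r$ binary vectors $\vx=(u(n),\ldots,u(n+r-1))$ and $\vy=(u(l),\ldots,u(l+r-1))$ into three regimes: (i) the diagonal $n=l$, with the trivial bound $T_{n,l}(\va)=O(p^r)$ contributing $O(N_k p^r)$; (ii) those $(n,l)$ for which $(\vx,\vy)$ is $s$-bad, again with the trivial bound $O(p^r)$ but now with the number of pairs controlled by Lemma~\ref{lem:Comp}, yielding $O(\alpha_s^r p^r)$; and (iii) the $s$-good case.

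\textbf{The $s$-good case.} The essential new step is the treatment of $s$-good pairs. Since $\va\neq\mathbf{0}$, I pick $h\in\{1,\ldots,s\}$ with $a_{h-1}\neq 0$ and apply the $s$-good condition to produce positions $i,j\in\{0,\ldots,r-s\}$, necessarily distinct (since $\ve_h\neq\mathbf{0}_s$), such that $(u(n+i),\ldots,u(n+i+s-1))=\ve_h$, $(u(l+i),\ldots,u(l+i+s-1))=\mathbf{0}_s$, $(u(n+j),\ldots,u(n+j+s-1))=\mathbf{0}_s$ and $(u(l+j),\ldots,u(l+j+s-1))=\ve_h$. Translating back to the elliptic-curve setting, the point $P_i$ contributes to the exponent only through $V_\bP(n+h-1)=P_i+Q_1$ and the point $P_j$ only through $V_\bP(l+h-1)=P_j+Q_2$, where $Q_1,Q_2$ depend only on the remaining $r-2$ coordinates of $\bP$. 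The exponent therefore factorises cleanly, and applying Lemma~\ref{lem:Bomb} (with $f(X,Y)=X$) first to the $P_i$-sum and then to the $P_j$-sum gives $T_{n,l}(\va)=O(p^{r-1})$; summing over the at most $N_k^2$ good pairs produces a contribution $O(N_k^2 p^{r-1})$.

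\textbf{Assembly and main obstacle.} Combining the three contributions, the squared Cauchy bound becomes $\bigl(\sum_\bP|\cdots|\bigr)^2 \ll p^r\bigl(N_k p^r+\alpha_s^r p^r+N_k^2 p^{r-1}\bigr)$. Taking the square root, summing over $\eta,\lambda$ via~\eqref{eq:plogp} and over $\va$ via $\sum_{0<|\va|<p}1/r(\va)\ll (\log p)^s$, and then applying Markov's inequality together with a union bound over the $O(\log\tau)$ dyadic scales $k$, removes an exceptional set of size $O(\delta p^r)$; the claimed discrepancy estimate then follows after routine simplifications using $N_k\le 2N$. I expect the principal technical difficulty to lie in step (iii), namely verifying that the combinatorial conditions of Lemma~\ref{lem:Comp} genuinely furnish two \emph{distinct} positions whose corresponding points $P_i,P_j$ decouple cleanly from each other and from every other contribution to the exponent, so that two independent applications of Lemma~\ref{lem:Bomb} deliver the full $p^{-1}$ saving per good pair; the logarithmic bookkeeping needed to recover the precise mixture of $\log p$ and $(\log p)^s$ factors stated in the theorem is comparatively routine.
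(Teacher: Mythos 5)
Your overall architecture is the paper's: Koksma--Sz\"usz with $L=p$, dyadic smoothing, Cauchy--Schwarz over $\bP$ with the Hasse bound, the trichotomy diagonal/$s$-bad/$s$-good with Lemma~\ref{lem:Comp} counting the bad pairs, and Markov plus a union bound over the $O(\log\tau)$ scales. Your treatment of the $s$-good case is correct and is in fact a little cleaner than the paper's: picking a single $h$ with $a_{h-1}\neq 0$, the two positions $i\neq j$ supplied by the $s$-good condition do decouple $P_i$ (appearing only in $V_\bP(n+h-1)=P_i+Q_1$) and $P_j$ (only in $V_\bP(l+h-1)=P_j+Q_2$) from each other and from all other terms, so two applications of Lemma~\ref{lem:Bomb} give $T_{n,l}(\va)=O(p^{r-1})$.

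The gap is quantitative: your bookkeeping does not deliver the stated bound. Summing $1/r(\va)$ over \emph{all} nonzero $\va$ costs $(\log p)^s$, and your good-pair saving is only $p^{-1}$ per pair, so after taking square roots and assembling you get
$$D_{\bP,s}(N)\ll \delta^{-1}\(N^{-1/2}+p^{-1/2}+\alpha_s^{r/2}N^{-1}\)(\log p)^s(\log\tau)^2,$$
which carries $(\log p)^s$ on all three terms, whereas the theorem claims only $\log p$ on the first two. The paper avoids this by splitting $\Delta_{\bP,s}(k)$ into the part $\Delta_{\bP,s,1}$ over vectors $\va$ with exactly one nonzero component --- handled verbatim by the one-dimensional argument of Theorem~\ref{th:D}, whose $\va$-sum contributes only $\log p$ --- and the part $\Delta_{\bP,s,2}$ over $\va$ with at least two nonzero components $a_i,a_j$. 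For the latter, the full strength of the $s$-good condition yields \emph{four} positions $i_1,i_2,j_1,j_2$ and hence four independent applications of Lemma~\ref{lem:Bomb}, saving $p^{-2}$ per good pair; this stronger saving is exactly what is needed so that the resulting $N_k^2p^{r-1}(\log p)^s$ term is absorbed into $N_k^2p^{r-1/2}\log p$. Without that split and the four-fold Bombieri step, the $p^{-1/2}$ and $N^{-1/2}$ terms are stuck with the extra factor $(\log p)^{s-1}$, so your argument proves a genuinely weaker statement than the one asserted.
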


\begin{proof}
Exactly as in the proof of Theorem~\ref{th:D}, by Lemma~\ref{le:K-S} we get
\begin{equation}
\label{DviaSignaM}
D_{\bP,s}(N) \ll \frac{1}{p} + \frac{1}{N N_k} \Delta_{\bP,s}(k),
\end{equation}
where
\begin{eqnarray*}
\Delta_{\bP,s}(k) & = & \sum_{\substack{0 < |\va| < p\\ \va = (a_1,\ldots,a_s) \in \Z^s}} \frac{1}{r(\va)}
\sum_{\eta=0}^{N_k} \left|\sum_{\lambda=1}^N \e_{N_k}\(-\eta\lambda\)\right| \\
& &\qquad  \qquad \left| \sum_{n=1}^{N_k}
\e_p\(\sum_{\nu=0}^{s-1}a_\nu x(V_\bP(n+\nu))\) \e_{N_k}\(\eta n\)\right|.
\end{eqnarray*}

We further split the sum $\Delta_{\bP,s}(k)$ into two parts $\Delta_{\bP,s,1}(k)$
and $\Delta_{\bP,s,2}(k)$ where the summation in $\Delta_{\bP,s,1}(k)$ is
taken over the vectors $\va = (a_1,\ldots,a_s) \in \Z^s$ with only one non-zero
component and $\Delta_{\bP,s,2}(k)$ includes all other terms. Thus
\begin{equation}
\label{eq:split}
\Delta_{\bP,s}(k)= \Delta_{\bP,s,1}(k)+ \Delta_{\bP,s,2}(k).
\end{equation}

As in the proof of Theorem~\ref{th:D} we obtain
\begin{equation}
\label{eq:Delta1}
\sum_{\bP\in \E(\F_p)^r} \Delta_{\bP,s,1}(k)  \ll
\(N_k^{3/2}p^{r}+N_k 3^{r/2} p^{r-1/4}+ N_k^2p^{r-1/2}\)\log \tau \log p.
\end{equation}

Now, let $\cA_s$ be the set of the vectors $\va = (a_1,\ldots,a_s) \in
\Z^s$ with $0 < |\va| < p$ and with at least two nonzero
components. For $\va\in \cA_s$, applying the Cauchy inequality
 and Hasse bound~\eqref{eq:Hasse}, we
derive
\begin{eqnarray*}
\lefteqn{\( \sum_{\bP\in \E(\F_p)^r}
\left| \sum_{n=1}^{N_k}
\e_p\(\sum_{\nu=0}^{s-1}a_\nu x(V_\bP(n+\nu))\)
\e_{N_k}\(\eta n\)\right|\)^2 }  \\
& & \qquad \le  (p^{1/2}+1)^{2r}\sum_{\bP\in \E(\F_p)^r} \left| \sum_{n=1}^{N_k}
\e_p\(\sum_{\nu=0}^{s-1}a_\nu x(V_\bP(n+\nu))\) \e_{N_k}\(\eta n\)\right|^2 \\
& & \qquad \ll p^{r} \sum_{n,l=1}^{N_k} \e_{N_k}\(\eta (n-l)\)\\
& & \qquad \qquad  \qquad\sum_{\bP\in \E(\F_p)^r}
\e_p\(\sum_{\nu=0}^{s-1}a_\nu
\(x(V_\bP(n+\nu)) - x(V_\bP(l+\nu))\)\).
\end{eqnarray*}

Now, as in Theorem~\ref{th:D}, we split the sum into two sums,
one over pairs $(n,l)$ such that the pair of vectors~\eqref{eq:pair}
is $s$-bad and one over $s$-good pairs.

Let $\cB_{r,s}$ be the set of pairs of indices $(n,l)$ such that the pair of
vectors~\eqref{eq:pair} is $s$-bad.
For $(n,l)\in\cB_{r,s}$, as in the proof of Theorem~\ref{th:D},
we estimate the inner sum over $\bP$ trivially as $O(p^r)$.

It remains to consider the case of $(n,l)\not\in\cB_{r,s}$. Since $\va\in \cA_s$ there exist at least two distinct indices $i,j=0,\ldots,s-1$ such that $a_i,a_j\ne 0$. Since $(n,l)\not\in\cB_{r,s}$, there exist two indices $i_1,i_2=0,\ldots, r-s$ such that
\begin{equation*}
\begin{split}
&\(u(n+i_1), \ldots, u(n+i_1+s-1)\) = \ve_i , \\
&\(u(l+i_1), \ldots, u(l+i_1+s-1)\) = \mathbf{0}_s,
\end{split}
\end{equation*}
and
\begin{equation*}
\begin{split}
&\(u(n+i_2), \ldots, u(n+i_2+s-1)\) =  \mathbf{0}_s ,\\
&\(u(l+i_2), \ldots, u(l+i_2+s-1)\) =\ve_i .
\end{split}
\end{equation*}
Similarly, there exist two indices $j_1,j_2=0,1,\ldots,r-s$ such that
\begin{equation*}
\begin{split}
&\(u(n+j_1), \ldots, u(n+j_1+s-1)\) = \ve_j , \\
&\(u(l+j_1), \ldots, u(l+j_1+s-1)\) = \mathbf{0}_s,
\end{split}
\end{equation*}
and
\begin{equation*}
\begin{split}
&\(u(n+j_2), \ldots, u(n+j_2+s-1)\) =  \mathbf{0}_s ,\\
&\(u(l+j_2), \ldots, u(l+j_2+s-1)\) =\ve_j .
\end{split}
\end{equation*}

When $\nu\in\{1,2,\ldots,s\}\setminus\{i,j\}$, the equations above show that
$$
u(n+\nu+i_1)=u(n+\nu+i_2)=u(n+\nu+j_1)=u(n+\nu+j_2)=0,
$$
and so $V_\bP(n+\nu)$ does not depend on any of $P_{i_1},P_{i_2},P_{j_1},P_{j_2}$. Similarly, $V_\bP(l+\nu)$ does not depend on any of $P_{i_1},P_{i_2},P_{j_1},P_{j_2}$.

When $\nu=i$ (so $\nu\not=j$), the equations above show that
$$
u(n+\nu+i_1)=1\text{ and }u(n+\nu+i_2)=u(n+\nu+j_1)=u(n+\nu+j_2)=0,
$$
so $V_\bP(n+\nu)=V_\bP(n+i)$ depends on $P_{i_1}$, but does not depend on any of $P_{i_2},P_{j_1},P_{j_2}$. Similarly $V_\bP(l+i)$ depends on $P_{i_2}$, but none of $P_{i_1},P_{j_1},P_{j_2}$; $V_\bP(n+j)$ depends on $P_{j_1}$, but none of $P_{i_1},P_{i_2},P_{j_2}$; and $V_\bP(l+j)$ depends on $P_{j_2}$, but none of $P_{i_1},P_{i_2},P_{j_1}$.

Let $\bP_{i_1,i_2,j_1,j_2}$ be the vector obtained from $\bP$ after  discarding the points $P_{i_1}$, $P_{i_2}$, $P_{j_1}$ and $P_{j_2}$. We can apply Lemma~\ref{lem:Bomb} to our inner sum as in the one dimensional case, but this time applied for four sums over the points $P_{i_1}$, $P_{i_2}$, $P_{j_1}$ and $P_{j_2}$ to obtain
\begin{equation*}
\begin{split}\sum_{\bP\in \E(\F_p)^r}
\e_p&\(\sum_{\nu=0}^{s-1}a_\nu
\(x(V_\bP(n+\nu)) - x(V_\bP(l+\nu))\)\) \\
=&\sum_{\bP_{i_1,i_2,j_1,j_2}\in \E(\F_p)^{r-4}} \e_p\(\Psi_{n,l}(\bP_{i_1,i_2,j_1,j_2})\)\\
& \qquad \sum_{P_{i_1}\in \E(\F_p)}
\e_p\(a_i
\(x(G_n(\bP_{i_1,i_2,j_1,j_2})+P_{i_1})\)\)\\
&\qquad \quad  \sum_{P_{j_1}\in \E(\F_p)}
\e_p\(a_j
\(x(G_n(\bP_{i_1,i_2,j_1,j_2})+P_{j_1})\)\)\\
&\qquad \qquad \sum_{P_{i_2}\in \E(\F_p)}
\e_p\(-a_ix(G_l(\bP_{i_1,i_2,j_1,j_2})+P_{i_2})\)\\
& \qquad\qquad \quad  \sum_{P_{j_2}\in \E(\F_p)}
\e_p\(-a_jx(G_l(\bP_{i_1,i_2,j_1,j_2})+P_{j_2})\)\\
&=O\(p^{r-4}\(p^{1/2}\)^4 \) = O\(p^{r-2}\),
\end{split}
\end{equation*}
where
$$
\Psi_{n,l}(\bP_{i_1,i_2,j_1,j_2}) = \sum_{\substack{\nu=0\\\nu \ne i,j}}^{s-1}a_\nu
\(x(V_\bP(n+\nu)) - x(V_\bP(l+\nu))\)
$$
depends only on $n,l$ and $\bP_{i_1,i_2,j_1,j_2}$ and
$G_m(\bP_{i,j}) \in \E(\F_p)$ denotes a point on $\E(\F_p)$
that depends only on $m$ and $\bP_{i,j}$. (Note that we are using the fact that $a_i$ and $a_j$ are non-zero at this point.)

Putting everything together, by Lemma~\ref{lem:Comp}, we obtain
\begin{equation*}
\begin{split}
\( \sum_{\bP\in \E(\F_p)^r}
\left| \sum_{n=1}^{N_k}
\e_p\(\sum_{\nu=0}^{s-1}a_\nu x(V_\bP(n+\nu))\)
\e_{N_k}\(\eta n\)\right|\)^2  &\ll p^{r}\(N_k^2p^{r-2}+\alpha_s^r p^r\)\\
&\ll N_k^2p^{2r-2}+\alpha_s^r p^{2r},
\end{split}
\end{equation*}
and thus
\begin{eqnarray*}
\sum_{\bP\in \E(\F_p)^r}
\left| \sum_{n=1}^{N_k}
\e_p\(\sum_{\nu=0}^{s-1}a_\nu x(V_\bP(n+\nu))\)
\e_{N_k}\(\eta n\)\right|  \ll N_kp^{r-1}+ \alpha_s^{r/2} p^{r}.
\end{eqnarray*}
Now using~\eqref{eq:plogp}, we obtain
\begin{equation*}
\begin{split}
\sum_{\bP\in \E(\F_p)^r}\Delta_{\bP,s,2}&(k) \\
&\ll \(N_kp^{r-1}+ \alpha_s^{r/2} p^{r}\)
\sum_{\substack{0 < |\va| < p\\ \va = (a_1,\ldots,a_s) \in \Z^s}} \frac{1}{r(\va)}
\sum_{\eta=0}^{N_k} \left|\sum_{\lambda=1}^N \e_{N_k}\(-\eta\lambda\)\right|\\
&\ll \(N_kp^{r-1} + \alpha_s^{r/2} p^{r}\) N_k\log N_k (\log p)^s\\
&\ll \(N_k^2p^{r-1}+ \alpha_s^{r/2}N_k p^{r}\)\log \tau (\log p)^s.
\end{split}
\end{equation*}
Thus, from~\eqref{eq:split} and~\eqref{eq:Delta1},
we obtain the inequality
\begin{eqnarray*}
\sum_{\bP\in \E(\F_p)^r} \Delta_{\bP,s,}(k)
& \ll & \(N_k^{3/2}p^{r}+N_k 3^{r/2} p^{r-1/4}+ N_k^2p^{r-1/2}\)\log \tau \log p\\
& & \qquad \qquad\qquad\qquad + \(N_k^2p^{r-1}+ \alpha_s^{r/2}N_k p^{r}\)\log \tau (\log p)^s\\
& \ll & \(N_k^{3/2}p^{r}+  N_k^2p^{r-1/2}\)\log \tau \log p\\
 & & \qquad\qquad\qquad\qquad \qquad\qquad+~\alpha_s^{r/2}N_k p^{r}\log \tau (\log p)^s.
\end{eqnarray*}

Hence, for each $k=1,\ldots,\rf{\log \tau}$, we see that the inequality
\begin{equation}
\label{eq:bads}
\Delta_{\bP,s}(k)\ge \delta^{-1}\(N_k^{3/2}  \log p+  N_k^2p^{-1/2}  \log p+ \alpha_s^{r/2}N_k
(\log p)^s\)(\log \tau)^2
\end{equation}
holds for at most $O(\delta p^r/\log\tau)$ vectors $\bP \in \E(\F_p)^r$. Therefore, the number of vectors $\bP\in \E(\F_p)^r$ for which~\eqref{eq:bads} holds for at least
one $k=1,\ldots,\rf{\log \tau}$ is $O(\delta p^r)$. For all the other points $\bP\in \E(\F_p)^r$, by~\eqref{DviaSignaM} and taking into account that $N_k=2N_{k-1}\le 2N$, as in the proof of  Theorem~\ref{th:D} we get
\begin{equation*}
\begin{split}
D_{\bP,s}(N) \ll \delta^{-1}\(N^{-1/2}\log p+ p^{-1/2}\log p +\alpha_s^{r/2}N^{-1} (\log p)^s\)(\log \tau)^2,
\end{split}
\end{equation*}
which concludes the proof.
\end{proof}

Again, in the most interesting case of sequences of maximal period
$\tau =2^r -1$ and $r$ chosen so that $2^r \ll p \ll  2^r$,
Theorem~\ref{th:Ds} is nontrivial for
$$
\tau\ge N \ge \tau^{\gamma_s+\varepsilon} ,
$$
for any fixed $\varepsilon>0$ and a sufficiently  large $p$,
where
$$
\gamma_s = \frac{\log \alpha_s}{2 \log 2} < 1.
$$

\section{Comments}

We remark that the proofs of Theorems~\ref{th:D} and~\ref{th:Ds} depend
only on the fact that the binary vectors
$\(u(n+1),\ldots, u(n+r)\)$, $n =1, \ldots, \tau$, are pairwise
distinct. Thus the same results hold for many other sequences  $\(u(n)\)_{n =1}^\infty$,
for example for sequences generated by non-linear  recurrence relations.
In fact, in this generality, these results are new even in the case of the
classical
subset sum generator~\eqref{eq:SSG Zm} over a residue ring (as the proof in~\cite{ConShp}
applies only to \lrs{s}). Our method also applies to bounds of multiplicative character
sums with the sequence~\eqref{eq:SSG Zm} on average over
the vectors
$\vz = (z_0,\ldots\,, z_{r-1}) \in \Z_m^r$.

On the other hand, it is still  an open
problem to  obtain nontrivial results about the multidimensional distribution
of the elliptic curve subset
sum generator~\eqref{eq:SSG EC} on short segments. Note that the bound~\eqref{eq:ElMa bound}
is nontrivial starting from the values of $N$ of order $(\log \tau)^4 (\log p)^2$
(which can be further reduced  by using the approach of~\cite{Shp}).

\section*{Acknowledgements}

The second and the third authors are very grateful to the organisers of
the 2nd  International Conference on
Uniform Distribution Theory, 2010, Strobl (Austria) for the invitation
to this meeting, where the idea of this work was born.

During the preparation of this paper,  A.~O.\ was supported in part
by the Swiss National Science Foundation Grant~133399, I.~S.\ was
supported in part by the  Australian Research Council
Grant~DP0881473 and  by the National Research Foundation of
Singapore Grant~CRP2-2007-03.

\end{document}